\documentclass[oneside]{amsart}

\usepackage{amssymb}
\usepackage{amsthm}
\usepackage{amscd}
\usepackage{enumitem}

\numberwithin{equation}{section}
\theoremstyle{plain}

\newtheorem{thm}{Theorem}[section]

\newtheorem{lemma}[thm]{Lemma}

\theoremstyle{definition}

\newcommand{\dlabel}[1]{\ifmmode \text{\ttfamily \upshape [#1] } \else
{\ttfamily \upshape [#1] }\fi \label{#1}}

\begin{document}

\setlength{\baselineskip}{15pt}

\title{Finite $p$-groups having Schur multiplier of maximum order}

\author{Sumana Hatui}
\address{School of Mathematics, Harish-Chandra Research Institute, Chhatnag Road, Jhunsi, Allahabad 211019, INDIA}
\address{\& Homi Bhabha National Institute, Training School Complex, Anushakti Nagar, Mumbai 400085, India}
\email{sumanahatui@hri.res.in, sumana.iitg@gmail.com}

\subjclass[2010]{20J99, 20D15}
\keywords{Schur Multiplier, Finite $p$-group}

\begin{abstract}
Let $G$ be a non-abelian $p$-group of order $p^n$ and $M(G)$ denote the Schur multiplier of $G$. Niroomand proved that $|M(G)| \leq p^{\frac{1}{2}(n+k-2)(n-k-1)+1}$ for non-abelian $p$-groups $G$ of order $p^n$ with derived subgroup of order $p^k$. Recently Rai classified $p$-groups $G$ of nilpotency class $2$ for which $|M(G)|$ attains this bound. In this article we show that there is no finite $p$-group $G$ of nilpotency class $c \geq 3$ for $p\neq3$ such that $|M(G)|$ attains this bound. Hence $|M(G)| \leq p^{\frac{1}{2}(n+k-2)(n-k-1)}$ for $p$-groups $G$ of class $c \geq 3$ where $p \neq 3$.
We also construct a $p$-group $G$ for $p=3$ such that $|M(G)|$ attains the Niroomand's bound.
\end{abstract}

\maketitle

\section{Introduction}
Let $G$ be a finite $p$-group and $G'$ denotes its derived subgroup. The concept of Schur multiplier $M(G)$ of a group $G$ was introduced by Schur \cite{IS1} in 1904 while studying of projective representation of groups. In 1956, Green \cite{JG} proved that $|M(G)| \leq p^{\frac{1}{2}n(n-1)}$ for $p$-groups $G$ of order $p^n$. So $|M(G)|=p^{\frac{1}{2}n(n-1)-t(G)}$, for some $t(G) \geq 0$.
It is of interest to characterize the structure of all non-abelian $p$-groups $G$ by the order of the Schur multiplier $M(G)$, i.e., when $t(G)$ is known. This problem was studied by several authors for various values of $t(G)$ and the results are known for $0 \leq t(G) \leq 6$ in \cite{BY,ZH,EG,PN3,PN1, SIX}.

Later Niroomand \cite{PN} improved the Green's bound and showed that for non abelian $p$-groups $G$ of order $p^n$ with $|G'|=p^k$, 

\centerline{$|M(G)|\leq p^{\frac{1}{2}(n+k-2)(n-k-1)+1}$.} 
He also classified groups $G$ such that $|M(G)|=p^{\frac{1}{2}(n+k-2)(n-k-1)+1}$ with $k=1$. For our convention instead of writing $|M(G)|=p^{\frac{1}{2}(n+k-2)(n-k-1)+1}$ we shall write $|M(G)|$ attains the bound, throughout this paper.

Recently Rai \cite[Theorem 2.1]{PK} classified finite $p$-groups $G$ of class $2$ such that $|M(G)|$ attains the bound.

Aim of this paper is to continue this line of investigation and to look into the classification of arbitrary finite $p$-groups attending this bound. It, surprisingly turns out that for $p \neq 3$ there is no finite $p$-group $G$ of nilpotency class $c \geq 3$ such that $|M(G)|$ attains the bound. Hence for $p$-groups $G$ of class $\geq 3$ and $p \neq 3$ we improve the bound and in this case $|M(G)| \leq p^{\frac{1}{2}(n+k-2)(n-k-1)}$ where $|G'|=p^k$.
Now one can ask what will happen for $p=3$? Is this above statement true for $p=3$?\\
The answer to this question is no. We construct an example, see Lemma \ref{C2}, which gives the answer of this question. Now we state our main result which is the following.
\begin{thm}\label{C1}
There is no non-abelian $p$-group $G$ of order $p^n$, $p\neq3$, having nilpotency class $c \geq 3$ with $|G'|=p^k$ and $|M(G)|=p^{\frac{1}{2}(n+k-2)(n-k-1)+1}$. In particular, $|M(G)| \leq p^{\frac{1}{2}(n+k-2)(n-k-1)}$ for $p$-groups $G$ of nilpotency class $c \geq 3$ and $p \neq 3$.
\end{thm}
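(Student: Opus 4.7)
The strategy is to induct on $|G|$: reduce a putative counterexample to a proper central quotient that still attains the Niroomand bound, use the induction hypothesis to force that quotient into class at most $2$, and then derive a contradiction from Rai's classification \cite[Theorem 2.1]{PK}.

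Suppose $G$ is of minimal order among counterexamples, so $p\neq 3$, $|G|=p^n$, $|G'|=p^k$, $c\geq 3$, and $|\M(G)|$ attains the Niroomand bound. Because $\gamma_c(G)\leq G'\cap\Z(G)$ is nontrivial, I would fix a central subgroup $K\leq \gamma_c(G)$ of order $p$. The Ganea--Ellis inequality gives
\[|\M(G)|\cdot|G'\cap K|\leq|\M(G/K)|\cdot|\M(K)|\cdot|(G/G')\otimes K|.\]
Here $K$ is cyclic of order $p$ contained in $G'$, so $\M(K)=1$, $G'\cap K=K$, and $|(G/G')\otimes K|=p^{d(G/G')}$. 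Feeding Niroomand's bound for the smaller group $G/K$ (of order $p^{n-1}$ with derived subgroup of order $p^{k-1}$) into the above yields
\[|\M(G)|\leq p^{\frac{1}{2}(n+k-4)(n-k-1)+d(G/G')}.\]
Comparison with the assumed value of $|\M(G)|$ forces $d(G/G')\geq n-k$, hence $d(G/G')=n-k$, $G/G'$ is elementary abelian, and every intermediate inequality is tight. In particular $G/K$ also attains the Niroomand bound; minimality of $|G|$ then forces its class to be at most $2$, and class at most $1$ is impossible (it would give $G'=K$ and hence $c\leq 2$), so $G/K$ has class exactly $2$ and must appear in Rai's list.

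The main obstacle is the final step: showing that no central extension of such a $G/K$ by $K\cong\mathbb{Z}/p$ can produce a group of class $3$ when $p\neq 3$. I would fix a presentation of $G$ lifting Rai's explicit presentation of $G/K$ and, using that $G/G'$ is elementary abelian of rank $n-k$, expand the commutator identities $[x_i^p,x_j]$ together with the Hall--Witt identity on generator triples. The obstructions to $\gamma_3(G)\neq 1$ collapse to scalars in $K\cong\mathbb{F}_p$ whose vanishing is governed by binomial coefficients such as $\binom{p}{2}$ or the integer $3$; for $p\neq 3$ these force $\gamma_3(G)=1$, contradicting $c\geq 3$, whereas at $p=3$ the same scalars vanish identically, which is precisely what makes the exceptional example of Lemma \ref{C2} possible. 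Carrying this arithmetic through for each family in Rai's list, and separately inspecting $p=2$ where Rai's classification is short, is the real content of the theorem.
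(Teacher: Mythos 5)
Your reduction is sound and in fact mirrors the paper's: the Jones inequality computation showing that $G/K$ again attains the bound and that $G/G'$ is elementary abelian is exactly Lemma \ref{R}$(i)$/Lemma \ref{K}, and your minimal-counterexample formulation packages the paper's double induction (on $c$ and on $n$, with maximal-class groups as base case via Lemma \ref{M}) into a single cleaner step that lands, as the paper does, on the case where $G$ has class exactly $3$ and $G/K$ is one of the three groups in Rai's list.

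The gap is in your final step, and it is a gap in conception, not just in detail. You propose to derive the contradiction by showing that no central extension of a Rai group by $\mathbb{Z}_p$ can have class $3$ when $p\neq 3$, with $\gamma_3(G)=1$ forced by commutator/Hall--Witt arithmetic. That statement is false: class-$3$ central extensions of these groups exist for every prime. For instance, if $H$ is a group of order $p^4$ and maximal class with $H/\Z(H)\cong ES_p(p^3)$, then $H\times\mathbb{Z}_p^{(n-4)}$ is a class-$3$ central extension of $G_1=ES_p(p^3)\times\mathbb{Z}_p^{(n-4)}$ for any odd $p$. So no amount of commutator expansion will kill $\gamma_3(G)$; what must be shown is that every such extension has Schur multiplier strictly below the bound. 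The paper does this by three different arguments, none of which is the one you sketch: for $G/K\cong G_1$ the bound forces $|\M(G)|=p^{\frac{1}{2}n(n-1)-(n-1)}$, excluded for class $\geq 3$ by Niroomand's result (Theorem \ref{P}); for $G/K\cong G_2$ it forces $|\M(G)|=p^{\frac{1}{2}n(n-1)-(n+1)}$ with $n=6$, excluded by Theorem \ref{S}; and for $G/K\cong G_3$ one invokes the Ellis--Wiegold inequality (Theorem \ref{RM}) and shows $|\mathrm{Image}(\psi_3)|\geq p$ unless the relations degenerate to $[\beta_1,\alpha_1]=[\beta_2,\alpha_2]=[\beta_3,\alpha_3]$, at which point Hall--Witt forces $p=3$. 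Your instinct that Hall--Witt and the prime $3$ enter is correct, but only in the third case and only as the final obstruction inside the $\psi_3$ computation; the first two cases require citing classification results for $t(G)=n-1$ and $t(G)=n+1$, which your proposal has no substitute for.
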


\noindent So the natural question arises here which is the following:\\
{\bf Question:} Does there exist finite $p$-groups of arbitrary nilpotency class for which this new bound is attained?\\
The answer of this question is yes for nilpotency class $3$ and $4$, see Section 3, Example1 and Example2.
We conclude this section by defining some notations. We say a group $G$ of order $p^n$ is of maximal class if the nilpotency class of $G$ is $n-1$.
By $ES_p(p^3)$ we denote the extra-special $p$-group of order $p^3$ having exponent $p$. By $\mathbb{Z}_p^{(k)}$ we denote $\mathbb{Z}_p \times \mathbb{Z}_p \times \cdots \times \mathbb{Z}_p$($k$ times). $\gamma_i(G)$ denotes the $i$-th term of the lower central series of group $G$ and $G^{ab}$ denotes the quotient group $G/\gamma_2(G)$.
\section{Preliminaries}
In this section we recall some results which will be used to prove our main result.
\begin{thm}$($\cite[Theorem 4.1]{MRRR}$)$\label{J}
Let $G$ be a finite group and $K$ a central subgroup of $G$. Set $A = G/K$. Then
$|M(G)||G'\cap K|$ divides $|M(A)| |M(K)| |A^{ab} \otimes K|$.
\end{thm}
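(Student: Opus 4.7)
The plan is to analyze the central extension $1 \to K \to G \to A \to 1$ via the Lyndon--Hochschild--Serre homology spectral sequence
\[
E^{2}_{p,q} = H_{p}(A, H_{q}(K,\mathbb{Z})) \;\Rightarrow\; H_{p+q}(G,\mathbb{Z}),
\]
noting that centrality of $K$ forces the action of $A$ on $K$ (and thus on $H_{\ast}(K,\mathbb{Z})$) to be trivial, so the coefficient modules simplify.

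First I would read off the three entries on the total-degree-$2$ antidiagonal: $E^{2}_{2,0} = M(A)$; $E^{2}_{1,1} = H_{1}(A) \otimes H_{1}(K) = A^{ab} \otimes K$ (using that $K$ is abelian with trivial action, together with the universal coefficient theorem, noting that the Tor contribution pushes to the spectral sequence position $(0,2)$ already accounted for); and $E^{2}_{0,2} = H_{2}(K,\mathbb{Z}) = M(K)$. Since the spectral sequence is first-quadrant and converges to $H_{2}(G,\mathbb{Z}) = M(G)$, the group $M(G)$ admits a three-step filtration whose successive quotients are $E^{\infty}_{2,0}$, $E^{\infty}_{1,1}$, $E^{\infty}_{0,2}$, so for finite groups
\[
|M(G)| = |E^{\infty}_{2,0}| \cdot |E^{\infty}_{1,1}| \cdot |E^{\infty}_{0,2}|.
\]

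Next I would track the differentials. At position $(2,0)$ only the transgression $d_{2}\colon M(A) \to E^{2}_{0,1} = K$ can be nonzero (nothing maps in since negative columns vanish, and $d_{r}$ for $r \ge 3$ lands in zero), so $E^{\infty}_{2,0} = \ker d_{2}$. The classical five-term exact sequence
\[
M(G) \to M(A) \to K \to G^{ab} \to A^{ab} \to 0
\]
identifies $\operatorname{image}(d_{2})$ with the kernel of $K \to G^{ab}$, which is precisely $G' \cap K$. Hence $|E^{\infty}_{2,0}| = |M(A)|/|G' \cap K|$. The entries $E^{\infty}_{1,1}$ and $E^{\infty}_{0,2}$ are obtained from $E^{2}_{1,1}$ and $E^{2}_{0,2}$ by iterated kernel/image operations, so they are subquotients of $A^{ab} \otimes K$ and $M(K)$ respectively, and their orders therefore divide $|A^{ab} \otimes K|$ and $|M(K)|$.

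Combining these facts yields
\[
|M(G)| \cdot |G' \cap K| = |M(A)| \cdot |E^{\infty}_{1,1}| \cdot |E^{\infty}_{0,2}|,
\]
which divides $|M(A)| \cdot |M(K)| \cdot |A^{ab} \otimes K|$, as claimed. The only nontrivial ingredient is the identification of the image of the transgression $d_{2}$ with $G' \cap K$ via the five-term exact sequence; the remainder is routine bookkeeping in a first-quadrant spectral sequence. (Alternatively, one could bypass the full machinery by quoting the Ganea-type low-degree exact sequence $M(K) \oplus (A^{ab} \otimes K) \to \ker\!\bigl(M(G) \to M(A)\bigr) \to 0$ arising from this spectral sequence, but the spectral-sequence argument is the most transparent route.)
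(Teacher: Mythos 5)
Your argument is correct, but note that the paper itself contains no proof of this statement: it is quoted as a known result from Jones \cite[Theorem 4.1]{MRRR}, and your spectral-sequence derivation --- filtering $M(G)$ by $E^{\infty}_{2,0}$, $E^{\infty}_{1,1}$, $E^{\infty}_{0,2}$ and using the five-term sequence to identify the image of the transgression $d_2\colon M(A)\to K$ with $G'\cap K$ --- is essentially the classical argument underlying the cited result. Two small cosmetic points: in computing $E^2_{1,1}=H_1(A,K)$ the universal coefficient Tor term is $\operatorname{Tor}(H_0(A),K)=\operatorname{Tor}(\mathbb{Z},K)=0$, so your remark about a Tor contribution ``pushed to position $(0,2)$'' is spurious (the identification $E^2_{1,1}=A^{ab}\otimes K$ is exact as stated); and the parenthetical ``Ganea-type'' surjection from $M(K)\oplus(A^{ab}\otimes K)$ onto $\ker\bigl(M(G)\to M(A)\bigr)$ is not literally what the spectral sequence yields --- one gets a two-step filtration of that kernel whose quotients are quotients of $M(K)$ and of $A^{ab}\otimes K$ --- but your main proof never relies on it.
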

\begin{thm}$($\cite[Main Theorem]{PN}$)$\label{N}
Let $G$ be a non-abelian finite $p$-group of order $p^n$. If $|G'|=p^k$, then we have \\
\centerline{$|M(G)| \leq p^{\frac{1}{2}(n+k-2)(n-k-1)+1}$.}
In particular, \\
\centerline{$|M(G)| \leq p^{\frac{1}{2}(n-1)(n-2)+1}$,}
and the equality holds in this last bound if and only if $G = H \times Z$, where $H$ is an extra special $p$-group of order $p^3$ and exponent $p$, and $Z$ is an elementary abelian $p$-group.
\end{thm}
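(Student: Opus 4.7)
The plan is to prove the three assertions --- the refined bound, the ``in particular'' specialisation, and the equality characterisation --- in sequence, by induction on $k$ with the aid of Theorem \ref{J}.

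For the main inequality, since $G$ is a nontrivial $p$-group with $G' \neq 1$, the intersection $G' \cap Z(G)$ is nontrivial, so I would choose a central subgroup $K \leq G' \cap Z(G)$ of order $p$. Setting $A = G/K$ one has $|A| = p^{n-1}$ and $|A'| = p^{k-1}$, and Theorem \ref{J} yields
\[
|M(G)| \cdot p \leq |M(A)| \cdot |A^{ab} \otimes K|.
\]
Because $K \leq G' \leq \Phi(G)$, the groups $G$ and $A$ share the same minimal number of generators $d$, and the inclusion $G' \leq \Phi(G)$ gives $d \leq n-k$; hence $|A^{ab} \otimes K| = p^{d(A)} \leq p^{n-k}$. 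For $k \geq 2$ the quotient $A$ is still nonabelian, and the induction hypothesis applied to $(n-1,k-1)$, combined with the arithmetic identity
\[
(n+k-2)(n-k-1) + 2 = (n+k-4)(n-k-1) + 2(n-k),
\]
closes the induction exactly. The base case $k=1$ must be handled separately, since Theorem \ref{J} is too lossy in that regime: when $|G'|=p$, $G'$ is central, $G$ has nilpotency class $2$, and $G$ decomposes (up to an abelian central direct factor) as a central product of extraspecial $p$-groups. A direct computation of $|M(G)|$ in this setting, using the known values of the Schur multipliers of the extraspecial factors together with a K\"unneth-type formula for central products, yields the required bound $|M(G)| \leq p^{\binom{n-1}{2}+1}$.

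The ``in particular'' specialisation is immediate from the algebraic identity
\[
(n+k-2)(n-k-1) = (n-1)(n-2) - k(k-1),
\]
valid for all $k$: for $k \geq 1$ the correction term $k(k-1)$ is nonnegative and vanishes iff $k=1$, so the exponent in the refined bound is maximised precisely when $k=1$.

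For the equality statement, equality in the ``in particular'' bound forces $k=1$, placing us in the base case. Running the explicit Schur multiplier calculation for class-$2$ groups with $|G'|=p$ with every inequality turned into an equality rigidifies the structure: $G/G'$ must be elementary abelian, and the extraspecial core must have the minimum possible order $p^3$, with an elementary abelian direct complement $Z$. The exponent-$p$ condition on the extraspecial factor is forced by a $p$-th power identity of the form $(xy)^p \equiv x^p y^p [y,x]^{\binom{p}{2}}$, which shows that a nontrivial $p$-th power in the extraspecial factor would introduce extra relations in a covering group and strictly lower $|M(G)|$. The principal obstacle throughout is the base case $k=1$: once the structural description of class-$2$ groups with derived subgroup of order $p$ and the explicit formula for $|M(ES_p(p^{2m+1}))|$ are in hand, both the sharp bound and the equality characterisation fall out; without that input Theorem \ref{J} alone is not sharp enough.
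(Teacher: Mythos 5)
The paper itself contains no proof of Theorem \ref{N}: it is imported verbatim from Niroomand \cite{PN} as a preliminary, so the only meaningful benchmark is the original argument in \cite{PN}, and your outline is essentially a reconstruction of that argument --- the same reduction by a central subgroup $K \leq G' \cap Z(G)$ of order $p$ through Theorem \ref{J}, the same induction on $k$, and the same recognition that everything hinges on the base case $k=1$. Your bookkeeping in the induction step is correct: $|M(K)|=1$, $|A^{ab} \otimes K| = p^{d(G/K)} = p^{d(G)} \leq p^{n-k}$ because $K \leq G' \leq \Phi(G)$, and both identities $(n+k-2)(n-k-1)+2=(n+k-4)(n-k-1)+2(n-k)$ and $(n+k-2)(n-k-1)=(n-1)(n-2)-k(k-1)$ check, the latter correctly yielding the ``in particular'' bound together with the fact that equality forces $k=1$.

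The one genuine gap is the appeal to a ``K\"unneth-type formula for central products''. No such formula exists: the K\"unneth formula $M(H \times W) \cong M(H) \times M(W) \times (H^{ab} \otimes W^{ab})$ holds for direct products only, and when $|G'|=p$ the structural decomposition is $G = E\,Z(G)$ with $E$ extraspecial and amalgamation $E \cap Z(G) = G' = Z(E)$; this is a direct product precisely when $G'$ splits off $Z(G)$ as a direct factor, which fails whenever some central element has $p$-th power generating $G'$ (the generalized extraspecial situation). In that case $M(G)$ is not determined by the multipliers of the factors, and your base-case computation does not get off the ground as written. The repair is cheap, however: if $z \in Z(G)$ has order $p^2$ with $G' = \langle z^p \rangle$, apply Theorem \ref{J} with the cyclic central subgroup $K = \langle z \rangle$ of order $p^2$; since $G/K$ is abelian of order $p^{n-2}$ and $|M(K)|=1$, one gets $|M(G)| \leq p^{\frac{1}{2}(n-2)(n-3)+(n-2)-1} = p^{\frac{1}{2}(n-1)(n-2)-1}$, strictly inside the bound --- the same device the present paper uses in Lemma \ref{R}$(ii)$ --- and the remaining central configurations reduce by similar Jones-type arguments or induction on $n$. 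After that one may assume $Z(G)$ elementary abelian, so $G = E \times W$ with $W$ elementary abelian, and then the honest K\"unneth formula plus the known multiplier orders of extraspecial groups ($p^2$ for $ES_p(p^3)$, strictly smaller for the exponent-$p^2$ groups of order $p^3$, and $p^{2m^2-m-1}$ for order $p^{2m+1}$ with $m \geq 2$, which falls short of the bound by a factor of exactly $p^2$) deliver both the sharp bound and the equality characterization just as you indicate. So: right skeleton, matching the cited source's strategy, but the base case needs this substitution of genuine computations for the nonexistent central-product K\"unneth formula.
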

The following result gives the classification of $p$-groups $G$ of class $2$ such that $|M(G)|$ attains the bound.
\begin{thm}$($\cite[Theorem 1.1]{PK}$)$\label{Q}
Let $G$ be a finite p-group of order $p^n$ and nilpotency class $2$ with $|G'| = p^k$. Then $|M(G)|=p^{\frac{1}{2}(n+k-2)(n-k-1)+1}$ if and only if $G$ is one of
the following groups.

$(i)$ $G_1=ES_p(p^3) \times \mathbb{Z}_p^{(n-3)}$, for an odd prime $p$.

$(ii)$ $G_2= \langle{\alpha,\alpha_1,\alpha_2,\beta_1,\beta_2 \mid [\alpha_i,\alpha]=\beta_i, \alpha^p=\alpha_i^p=\beta_i^p=1 (i=1,2)\rangle}$, for an odd prime $p$.

$(iii)$ $G_3=\langle \alpha_1,\beta_1,\alpha_2,\beta_2,\alpha_3, \beta_3 \mid [\alpha_1,\alpha_2]=\beta_3, [\alpha_2,\alpha_3]=\beta_1,$ $[\alpha_3,\alpha_1]=\beta_2,\alpha_i^{(p)}=\beta_i^p=1(i=1,2,3)\rangle$, for an odd prime $p$.
\end{thm}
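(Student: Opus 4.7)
Suppose for contradiction that there exists a non-abelian $p$-group $G$ with $p \neq 3$, order $p^n$, class $c \geq 3$, $|G'|=p^k$, and $|M(G)|$ attaining the Niroomand bound. The strategy is to apply Theorem \ref{J} with a carefully chosen central subgroup so that the quotient still attains the bound, set up an induction on $n$ that collapses the problem to $c=3$, and then rule out each of the three class-$2$ groups produced by Theorem \ref{Q}.

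For the key reduction step, pick a central subgroup $K \leq Z(G) \cap \gamma_c(G)$ of order $p$; since $\gamma_c(G) \subseteq G'$, we have $K \subseteq G'$. Put $A = G/K$, so $|A|=p^{n-1}$ and $|A'|=p^{k-1}$. Because $K$ is cyclic, $|M(K)|=1$, and because $K\subseteq G'\subseteq\Phi(G)$ we have $d(A)=d(G)$, giving $|A^{ab}\otimes K|=p^{d(G)}$. Theorem \ref{J} then yields
\[
|M(G)|\cdot p \;\leq\; |M(A)|\cdot p^{d(G)}.
\]
Since $c\geq 3$ forces $k\geq 2$ (as $|G'|\geq|\gamma_2(G)/\gamma_3(G)|\cdot|\gamma_3(G)|\geq p^2$), $A$ is non-abelian and Theorem \ref{N} gives $|M(A)|\leq p^{\frac{1}{2}(n+k-4)(n-k-1)+1}$. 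Substituting $|M(G)|=p^{\frac{1}{2}(n+k-2)(n-k-1)+1}$ and simplifying yields $d(G)\geq n-k$. The reverse inequality is automatic from $\Phi(G)\supseteq G'$, so $d(G)=n-k$, i.e.\ $\Phi(G)=G'$, and equality throughout forces $A$ to attain the Niroomand bound as well.

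Next I would induct on $n$. If $c\geq 4$, then $A$ has class $c-1\geq 3$, strictly smaller order, and attains the bound, contradicting the inductive hypothesis. Hence $c=3$, and $A$ is class $2$ attaining the bound, so by Theorem \ref{Q} either $A\cong G_1=ES_p(p^3)\times\mathbb{Z}_p^{(n-4)}$ (with $k=2$), or $A\cong G_2$ (forcing $n=6$, $k=3$), or $A\cong G_3$ (forcing $n=7$, $k=4$). Note that the would-be smallest base case $n=4$ is subsumed in the $G_1$ analysis.

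The final task is to eliminate each of the three possibilities for $A$. For each candidate, I would parametrize all central extensions $1\to\mathbb{Z}_p\to G\to A\to 1$ satisfying $\gamma_3(G)\neq 1$ and $\Phi(G)=G'$ via the classes in $H^2(A,\mathbb{Z}_p)$ meeting these constraints, write down an explicit power-commutator presentation for each resulting $G$, and compute $|M(G)|$ either directly through the Hopf formula or by a second application of Theorem \ref{J} that reduces to a known class-$2$ calculation. For $A\cong G_1$ and $A\cong G_2$, I expect the Jacobi/Hall--Witt relations together with the limited structure of $A'$ to force a uniform loss of at least one factor of $p$ in $|M(G)|$ for every odd prime. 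The principal obstacle is the case $A\cong G_3$: this is precisely where the Lemma \ref{C2} example for $p=3$ lives, so the argument must exploit $p\neq 3$ in an essential way. The mechanism is expected to arise from the interplay of the cyclically symmetric commutator relations $[\alpha_1,\alpha_2]=\beta_3,\ [\alpha_2,\alpha_3]=\beta_1,\ [\alpha_3,\alpha_1]=\beta_2$ in $G_3$ with the Hall--Witt identity applied to lifts $\tilde\alpha_i$: this relates the triple commutators $[\tilde\alpha_i,\tilde\beta_i]\in K$ in a way whose solvability under the auxiliary exponent-$p$ relations depends on whether $3$ is invertible modulo $p$, matching exactly the dichotomy between Lemma \ref{C2} and Theorem \ref{C1}.
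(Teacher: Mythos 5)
Your proposal does not prove the statement it was supposed to prove. The statement at hand is Theorem \ref{Q}, the classification of finite $p$-groups of nilpotency class $2$ whose Schur multiplier attains Niroomand's bound --- an if-and-only-if with two halves: one must verify that $G_1$, $G_2$, $G_3$ actually attain the bound (e.g.\ by computing $M(G_i)$ directly or via the Hopf formula), and conversely show that every class-$2$ group of order $p^n$ with $|M(G)|=p^{\frac{1}{2}(n+k-2)(n-k-1)+1}$ is isomorphic to one of these three. Your argument instead opens with ``suppose there exists a group of class $c\geq 3$ with $p\neq 3$ attaining the bound'' and invokes Theorem \ref{Q} as a known input (``by Theorem \ref{Q} either $A\cong G_1$\dots''); that is a proof sketch of Theorem \ref{C1}, the paper's main theorem, with Theorem \ref{Q} presupposed rather than established. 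Nothing in your text addresses either direction of the equivalence: you never compute $|M(G_i)|$ for the three listed groups, and you never constrain the structure of an arbitrary class-$2$ group attaining the bound. Note also that within this paper Theorem \ref{Q} carries no internal proof at all --- it is quoted from Rai \cite[Theorem 1.1]{PK} --- so a blind proof of it would need to reconstruct the argument of that paper (reductions as in Lemma \ref{R}, computations with $M(G)$ for class-$2$ groups), which is entirely absent here.

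Even judged as an attempt at Theorem \ref{C1}, the decisive step is left programmatic: the elimination of the three candidate quotients is announced with ``I would parametrize\dots'' and ``I expect\dots'' rather than carried out, and this is precisely where the paper does its real work --- ruling out $G_1$ via Theorem \ref{P}, $G_2$ via Theorem \ref{S}, and $G_3$ via the Ellis--Wiegold inequality (Theorem \ref{RM}) applied to the homomorphism $\psi_3$, with the Hall--Witt identity isolating $p=3$ as the exceptional prime. Your instinct that Hall--Witt applied to the cyclic relations of $G_3$ is the mechanism distinguishing $p=3$ is correct in spirit, but in your write-up it remains a conjecture, not a proof, and in any case it is a proof of the wrong theorem.
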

The following result is a consequence of \cite[Main Theorem]{SX}.
\begin{thm}\label{S}
There is no group $G$ of order $p^n$ and of class $c\geq 3$ such that $|M(G)|=p^{\frac{1}{2}n(n-1)-(n+1)}$ where $n \geq 6$, $p$ is odd.
\end{thm}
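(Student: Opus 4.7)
The plan is to derive this directly from the Main Theorem of \cite{SX}, which classifies all finite non-abelian $p$-groups of order $p^n$ whose Schur multiplier has order $p^{\frac{1}{2}n(n-1)-(n+1)}$ for $n \geq 6$ and $p$ odd. Rewriting the hypothesis in Green's parameter via $|M(G)| = p^{\frac{1}{2}n(n-1)-t(G)}$, this is precisely the case $t(G) = n+1$, which is already noted as an open-ended value in the introduction. Once the classification of \cite{SX} is in hand, the proof reduces to reading off the nilpotency class from each of the finitely many isomorphism types appearing there.

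Each family in \cite{SX} is given by explicit generators and relators --- typically a small $p$-group of order $p^3$, $p^4$, or $p^5$ direct-producted with an elementary abelian $p$-group, possibly with a handful of sporadic exceptions. For each such presentation I would compute $\gamma_3(G)$ by evaluating the relevant iterated commutators of the named generators and verify that it is trivial. If every group on the list turns out to have class at most $2$, the non-existence of a class $c \geq 3$ example follows at once, which is exactly the statement of the theorem.

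The main obstacle to a self-contained argument (one that avoids quoting \cite{SX}) would be to prove directly that $t(G) = n+1$ forces $\gamma_3(G) = 1$. The natural attempt is to apply Theorem \ref{J} with $K$ a central subgroup of order $p$ lying in $\gamma_c(G) \leq G' \cap \Z(G)$, so that $A = G/K$ still has class $c-1 \geq 2$; combining the divisibility $|M(G)|\cdot|G'\cap K|$ divides $|M(A)|\cdot|M(K)|\cdot|A^{\mathrm{ab}}\otimes K|$ with the Niroomand bound (Theorem \ref{N}) applied to $A$ should push $t(G) \geq n+2$. The delicate point is controlling $|A^{\mathrm{ab}} \otimes K|$ tightly enough to get a contradiction from the sole assumption $t(G)=n+1$, and it is exactly this bookkeeping that the classification in \cite{SX} absorbs, which is why citing the Main Theorem there is the cleanest route.
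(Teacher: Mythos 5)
Your primary route --- quoting the Main Theorem of \cite{SX}, which classifies the groups with $t(G)=n+1$ for $n\geq 6$ and $p$ odd, and then reading off that none of the listed groups has nilpotency class $3$ or more --- is exactly how the paper obtains this statement; the paper offers no further argument beyond citing that classification. Your remarks on a hypothetical self-contained proof are a reasonable aside, but they are not needed and do not change the fact that your approach matches the paper's.
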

The following result immediately follows from \cite[Theorem 21]{PN2}.
\begin{thm}\label{P}
There is no group $G$ of order $p^n$ and of class $c\geq 3$ such that $|M(G)|=p^{\frac{1}{2}n(n-1)-(n-1)}$.
\end{thm}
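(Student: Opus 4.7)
The plan is to derive Theorem~\ref{P} as a direct consequence of the classification in \cite[Theorem~21]{PN2}, which I read as enumerating every non-abelian finite $p$-group $G$ of order $p^n$ with $|M(G)| = p^{\frac{1}{2}n(n-1)-(n-1)}$. Once that list of extremal groups is extracted, the only remaining task is to go through it case by case and observe that every group on it has nilpotency class exactly $2$; no class-$\ge 3$ example appears. This immediately yields the non-existence claim of Theorem~\ref{P}.

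The reason to expect that the classification contains only class-$2$ groups is structural. The exponent $\frac{1}{2}n(n-1)-(n-1)$ is Green's bound shifted by $t(G)=n-1$, and the known extremal families for small values of $t(G)$ (for instance those appearing in Theorem~\ref{Q}, and earlier in \cite{BY,ZH,EG,PN3,PN1,SIX}) are all built from central products of an extraspecial $p$-group of order $p^3$ with an elementary abelian group, for which $\gamma_3(G)=1$ is visible from the defining commutator relations. Reading off $[G',G]=1$ from each presentation in the list is then a routine check.

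The principal obstacle is a potential hypothesis gap between \cite[Theorem~21]{PN2} and the statement of Theorem~\ref{P}: for instance the cited classification may impose a lower bound on $n$ or a parity restriction on $p$. Any remaining small cases would then have to be closed by a separate argument. One natural way to do this is to combine Theorem~\ref{J} applied to a central subgroup $K\le \gamma_c(G)\cap \Z(G)$ of order $p$ with the class-$c\ge 3$ input $\gamma_3(G)\ne 1$, which forces $|G'|\ge p^2$ and allows one to push the bound on $|M(G)|$ strictly below $p^{\frac{1}{2}n(n-1)-(n-1)}$. These ancillary cases should not require any substantial new idea beyond what is already contained in the preliminaries of Section~2.
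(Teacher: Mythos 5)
Your proposal matches the paper's treatment: the paper states that Theorem~\ref{P} ``immediately follows from \cite[Theorem 21]{PN2}'', i.e.\ exactly your strategy of invoking Niroomand's classification of groups with $|M(G)|=p^{\frac{1}{2}n(n-1)-(n-1)}=p^{\frac{1}{2}(n-1)(n-2)}$ and observing that every group on that list has nilpotency class at most $2$. The auxiliary contingencies you raise in your last paragraph are not needed, but the core argument is the same as the paper's.
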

Let $G$ be a finite $p$-group of nilpotency class $3$ with centre $Z(G)$. Set $\bar{G}=G/Z(G)$. 
Define a homomorphism $\psi_2 :\bar{G}^{ab} \otimes \bar{G}^{ab} \otimes \bar{G}^{ab}  \rightarrow \frac{\gamma_2(G)}{\gamma_3(G)} \otimes \bar{G}^{ab}$ such that \\
$\psi_2(\bar{x}_1 \otimes \bar{x}_2 \otimes \bar{x}_3)=[x_1,x_2]_{\gamma} \otimes \bar{x}_3 + [x_2,x_3]_{\gamma} \otimes \bar{x}_1 + [x_3,x_1]_{\gamma} \otimes \bar{x}_2$, where  $\bar{x}$ denotes the image in $\bar{G}$ of the element $x \in G$ and $[x,y]_{\gamma}$ denotes the image in $\frac{\gamma_2(G)}{\gamma_3(G)}$ of the commutator $[x,y] \in G$. Define another homomorphism $\psi_3 : \bar{G}^{ab} \otimes \bar{G}^{ab} \otimes \bar{G}^{ab} \otimes \bar{G}^{ab} \rightarrow \gamma_3(G) \otimes \bar{G}^{ab}$ such that\\
$\psi_3(\bar{x}_1 \otimes \bar{x}_2 \otimes \bar{x}_3 \otimes \bar{x}_4)=[[x_1,x_2],x_3] \otimes \bar{x}_4 + [x_4,[x_1,x_2]] \otimes \bar{x}_3 + [[x_3,x_4],x_1]\otimes \bar{x}_2$ $ +[x_2,[x_3,x_4]] \otimes \bar{x}_1.$
\begin{thm}$($\cite[Proposition 1]{EW} and \cite{E}$)$\label{RM}
Let $G$ be a finite $p$-group of nilpotency class $3$. With the notations above, we have \\
\centerline{$|M(G)||\gamma_2 (G)||Image(\psi_2)||Image(\psi_3)| \leq |M(G^{ab})||\frac{\gamma_2(G)}{\gamma_3(G)} \otimes \bar{G}^{ab}||\gamma_3(G) \otimes \bar{G}^{ab}|$.}
\end{thm}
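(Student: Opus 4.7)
The plan is to iterate the Ganea-type five-term exact sequence for the Schur multiplier along the central series $G \supset \gamma_2(G) \supset \gamma_3(G) \supset 1$, and to identify $\operatorname{Image}(\psi_2)$ and $\operatorname{Image}(\psi_3)$ with subgroups of the kernels of the resulting Ganea-style maps via commutator calculus.

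Since $G$ has class $3$, the subgroup $\gamma_3(G)$ lies in $Z(G)$. I would first apply Ganea's sequence to the central extension $1 \to \gamma_3(G) \to G \to G/\gamma_3(G) \to 1$, obtaining
\[
\gamma_3(G) \otimes \bar{G}^{ab} \xrightarrow{\varphi_1} M(G) \to M(G/\gamma_3(G)) \to \gamma_3(G) \to 0.
\]
The appearance of $\bar{G}^{ab}$ rather than $G^{ab}$ on the left requires a small refinement: one checks that the Ganea map vanishes on the image of $Z(G) \otimes \gamma_3(G)$, since a commutator involving a central element is trivial. Since $G/\gamma_3(G)$ has class $2$, I would then apply Ganea's sequence to $1 \to \gamma_2(G)/\gamma_3(G) \to G/\gamma_3(G) \to G^{ab} \to 1$ and use the surjection $\bar{G}^{ab} \twoheadrightarrow \overline{G/\gamma_3(G)}^{ab}$ induced by $\gamma_3(G) \subseteq Z(G)$, to obtain
\[
\gamma_2(G)/\gamma_3(G) \otimes \bar{G}^{ab} \xrightarrow{\varphi_2} M(G/\gamma_3(G)) \to M(G^{ab}) \to \gamma_2(G)/\gamma_3(G) \to 0.
\]
Combining these two sequences and using $|\gamma_2(G)| = |\gamma_3(G)|\cdot|\gamma_2(G)/\gamma_3(G)|$ gives
\[
|M(G)| \cdot |\gamma_2(G)| \cdot |\ker \varphi_1| \cdot |\ker \varphi_2| \leq |M(G^{ab})| \cdot \Bigl|\frac{\gamma_2(G)}{\gamma_3(G)} \otimes \bar{G}^{ab}\Bigr| \cdot \bigl|\gamma_3(G) \otimes \bar{G}^{ab}\bigr|.
\]

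To finish, it remains to prove the inclusions $\operatorname{Image}(\psi_3) \subseteq \ker \varphi_1$ and $\operatorname{Image}(\psi_2) \subseteq \ker \varphi_2$, and this is where I expect the main obstacle to lie. Fixing a free presentation $1 \to R \to F \to G \to 1$ and using the Hopf formula $M(G) \cong (R \cap F')/[F,R]$, both $\psi_2$ and $\psi_3$ correspond, after applying the respective Ganea map, to alternating sums of iterated commutators in $F$. The Hall--Witt identity
\[
[[x,y^{-1}],z]^{y} \cdot [[y,z^{-1}],x]^{z} \cdot [[z,x^{-1}],y]^{x} = 1
\]
(and its four-fold analogue applied with $\gamma_4(G) = 1$) forces these alternating sums to lie in $[F,R]$, which is precisely the vanishing required. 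The subtlety is that Hall--Witt holds in $F$ only up to commutators of higher weight, so one must carefully control how these error terms interact with $R$ before concluding that they represent trivial elements of the relevant Schur multiplier; once this is handled, substituting $|\operatorname{Image}(\psi_i)| \leq |\ker \varphi_i|$ into the displayed inequality yields the theorem.
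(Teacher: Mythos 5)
The paper does not prove Theorem~\ref{RM} at all: it is quoted verbatim from \cite[Proposition~1]{EW} together with \cite{E}, so there is no in-paper proof to compare against. Your strategy --- iterating the Ganea sequence along $1\to\gamma_3(G)\to G\to G/\gamma_3(G)\to1$ and $1\to\gamma_2(G)/\gamma_3(G)\to G/\gamma_3(G)\to G^{ab}\to1$, and identifying $\operatorname{Image}(\psi_i)$ inside the kernels of the Ganea maps --- is essentially a repackaging of the Ellis--Wiegold argument, which filters the nonabelian exterior square $G\wedge G$ by the images of $\gamma_i(G)\wedge G$ and uses the Jacobi/Hall--Witt relations in $G\wedge G$ to detect the kernel of the surjections $\gamma_i(G)/\gamma_{i+1}(G)\otimes\bar G^{ab}\onto (\gamma_i(G)\wedge G)/(\gamma_{i+1}(G)\wedge G)$. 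The order-counting part of your argument is correct: the two exact sequences give $|M(G)||\gamma_2(G)||\ker\varphi_1||\ker\tilde\varphi_2|=|M(G^{ab})||\gamma_3(G)\otimes\bar G^{ab}||\tfrac{\gamma_2(G)}{\gamma_3(G)}\otimes\bar G^{ab}|$ once $\varphi_2$ is pulled back along the surjection from the $\bar G^{ab}$-tensor, so everything reduces to the two inclusions you state.

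Those two inclusions are, however, exactly the content of the theorem, and you leave them as a sketch with an acknowledged obstacle, so as written the proof is incomplete at its decisive step. Two corrections would tighten it. First, the Hall--Witt identity is an exact identity in any group, including the free group $F$; the thing that holds only modulo higher weight is the simplified Jacobi congruence $[[x,y],z][[y,z],x][[z,x],y]\equiv1$, and the error terms come from the conjugations and inversions in Hall--Witt. What actually closes the argument is that these error terms have commutator weight at least $c+2$, hence lie in $[\gamma_{c+1}(F),F]\subseteq[R,F]$ because $\gamma_{c+1}(F)\subseteq R$ for the relevant quotient (class $2$ for $\psi_2$, class $3$ for $\psi_3$); for $\psi_3$ one also needs that the two leftover weight-four terms $[[x_1,x_2],[x_3,x_4]]$ and $[[x_3,x_4],[x_1,x_2]]$ cancel. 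Second, your justification for replacing $G^{ab}$ by $\bar G^{ab}$ (``a commutator involving a central element is trivial'') is not sufficient as stated, since the Ganea map lands in $M(G)$, not in $G'$; the correct reason is that a lift $\tilde z$ of a central element satisfies $[F,\tilde z]\subseteq R$, and then the three-subgroups lemma gives $[\gamma_3(F),\tilde z]\subseteq[F,R]$, so the class of $[\tilde n,\tilde z]$ in $(R\cap F')/[F,R]$ vanishes. With these points supplied the proposal becomes a complete proof along the same lines as the cited sources.
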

\section{Proof of Main Theorem}
In this section we prove our main theorem. Before going to the proof of main theorem, we need some reults. We use Theorem \ref{N} and Theorem \ref{J} in the proof of the following lemma without further reference.
\begin{lemma}\label{R}
Let $G$ be a non-abelian $p$-group of order $p^n$ with $|G'|=p^k$ and $|M(G)|$ attains the bound. Then the following hold:

$(i)$ $G^{ab}$ is an elementary abelian $p$-group.

$(ii)$ $Z(G)$ is an elementary abelian $p$-group.

$(iii)$ $Z(G) \subseteq G'$ $($except $G \cong ES_p(p^3) \times \mathbb{Z}_p^{(n-3)})$.
\end{lemma}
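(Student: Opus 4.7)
All three parts follow the same strategy: pick a judiciously chosen cyclic central subgroup $K$ of $G$, apply Theorem~\ref{J} in the divisibility form
\[
|M(G)|\cdot|G'\cap K|\ \ \text{divides}\ \ |M(G/K)|\cdot|M(K)|\cdot|(G/K)^{ab}\otimes K|,
\]
bound $|M(G/K)|$ via Niroomand's inequality (Theorem~\ref{N})---falling back on the abelian Schur multiplier bound $p^{\binom{m}{2}}$ when the quotient happens to be abelian---and compare the resulting upper bound on $|M(G)|$ with the hypothesized value $p^{\frac{1}{2}(n+k-2)(n-k-1)+1}$. All the arithmetic reduces to the identity $\frac{1}{2}(n+k-2)(n-k-1)-\frac{1}{2}(n+k-3)(n-k-2)=n-2$ and its obvious shifted variants.

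\textbf{For~(i)}, non-abelianness of $G$ ensures $\gamma_c(G)\subseteq G'\cap Z(G)\neq 1$, so we pick $z\in G'\cap Z(G)$ of order $p$ and set $K=\langle z\rangle$. Then $|K\cap G'|=p$, $|M(K)|=1$, $|(G/K)'|=p^{k-1}$, $(G/K)^{ab}=G^{ab}$ (since $K\subseteq G'$), and by the Burnside basis theorem $|(G/K)^{ab}\otimes K|=p^{d(G^{ab})}$ where $d(\cdot)$ denotes the minimum number of generators. The inequality reads $p\cdot|M(G)|\leq p^{\frac{1}{2}(n+k-4)(n-k-1)+1+d(G^{ab})}$, and matching with the hypothesis forces $d(G^{ab})\geq n-k=\log_p|G^{ab}|$, whence $G^{ab}$ is elementary abelian.

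\textbf{For~(ii)}, suppose $Z(G)$ contains an element $z$ of order $p^2$; set $K=\langle z\rangle$. By~(i), $z^p\in G^p\subseteq G'$, so $K\cap G'$ is non-trivial. We split on whether $z\in G'$ (so $|K\cap G'|=p^2$, $|(G/K)'|=p^{k-2}$) or not (so $|K\cap G'|=p$, $|(G/K)'|=p^{k-1}$). In both sub-cases $(G/K)^{ab}$ is a quotient of the elementary abelian $G^{ab}$, hence elementary abelian, so $(G/K)^{ab}\otimes K\cong(G/K)^{ab}$; feeding in the appropriate Niroomand or abelian bound produces an upper bound on $|M(G)|$ whose exponent is strictly smaller than $\frac{1}{2}(n+k-2)(n-k-1)+1$ in every sub-case, contradicting the hypothesis. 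Hence $Z(G)$ is elementary abelian.

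\textbf{For~(iii)}, assume $Z(G)\not\subseteq G'$; by~(ii) we can pick $z\in Z(G)\setminus G'$ of order $p$ and put $K=\langle z\rangle$. Now $K\cap G'=1$, $|(G/K)'|=p^k$, and $(G/K)^{ab}$ is elementary abelian of order $p^{n-k-1}$, so $(G/K)^{ab}\otimes K$ has the same order. The analogous calculation produces an exponent deficit of $k-1$, so attaining the bound forces $k=1$. But for $k=1$ the hypothesis collapses to $|M(G)|=p^{\frac{1}{2}(n-1)(n-2)+1}$, and the equality clause of Theorem~\ref{N} then identifies $G\cong ES_p(p^3)\times\mathbb{Z}_p^{(n-3)}$, the stated exception. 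The main technical point throughout is verifying that every exponent deficit above is strictly positive under the standing hypotheses---automatic in the class-$\geq 3$ setting of Theorem~\ref{C1} from $n-k\geq 2$ (since $G^{ab}$ is non-cyclic) and $n\geq 4$---which is a matter of routine bookkeeping case by case.
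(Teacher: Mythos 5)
Your proposal is correct and follows essentially the same route as the paper's own proof: the same choice of central subgroup $K$ in each part, the same combination of Theorem~\ref{J} with Niroomand's bound (Theorem~\ref{N}), the same exponent arithmetic yielding deficits of $n-k$ and $k-1$ in parts (ii) and (iii), and the same appeal to the equality clause of Theorem~\ref{N} to produce the exceptional group when $k=1$. The only differences are cosmetic: you record $|(G/K)^{ab}\otimes K|$ via the generator number $d(G^{ab})$ rather than bounding it by $|G^{ab}|$, and in (ii) you use part (i) to rule out $K\cap G'=1$, collapsing the paper's three sub-cases to two.
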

\begin{proof}
$(i)$ Let $K$ be a central subgroup of order $p$ such that $K \subseteq G'$. Now $|G/K|=p^{n-1}$ and $|(G/K)'|=p^{k-1}$. Now we get\\
\centerline{ $|M(G/K)| \leq p^{\frac{1}{2}(n-1+k-1-2)(n-1-k+1-1)+1}=p^{\frac{1}{2}(n+k-4)(n-k-1)+1}$.}
Hence the inequality $|M(G)|p \leq |M(G/K)||G^{ab}|$ gives \\
\centerline{$|M(G)|\leq p^{\frac{1}{2}(n+k-4)(n-k-1)+1} p^{(n-k-1)}=p^{\frac{1}{2}(n+k-2)(n-k-1)+1}$.}
It follows that $|M(G/K)|$ attains the bound and $G/G'$ is elementary abelian $p$-group. 

$(ii)$ Suppose the exponent of $Z(G) > p$. Consider a cyclic central subgroup $K$ of order $p^2$. Either $K \subset G'$, $K \cap G'=1$ or $K \cap G'=\mathbb{Z}_p$.

For the first case $|G/K|=p^{n-2}$ and $|(G/K)'|=p^{k-2}$. Hence\\
\centerline{ $|M(G/K)| \leq p^{\frac{1}{2}(n-2+k-2-2)(n-2-k+2-1)+1}=p^{\frac{1}{2}(n+k-6)(n-k-1)+1}$.}
Therefore from $|M(G)|p^2 \leq |M(G/K)||G^{ab}|$ we have\\
\centerline{$|M(G)| \leq p^{\frac{1}{2}(n+k-4)(n-k-1)+1} p^{(n-k-2)}=p^{\frac{1}{2}(n+k-2)(n-k-1)+1-(n-k)}$,}
which is a contradiction.
Other cases follows similarly.

$(iii)$  Suppose $Z(G) \nsubseteq G'$, consider a central subgroup $K$ of order $p$ such that $K \cap G'=1$. Since $|G/K|=p^{n-1}$ and $|(G/K)'|=p^k$. So \\
\centerline{$|M(G/K)| \leq p^{\frac{1}{2}(n+k-3)(n-k-2)+1}$. }
Hence we have\\
\centerline{$|M(G)| \leq |M(G/K)||G/G'K| \leq p^{\frac{1}{2}(n+k-3)(n-k-2)+1} p^{(n-k-1)}$}
\centerline{\hspace{4 cm}$= p^{\frac{1}{2}(n+k-2)(n-k-1)+1-(k-1)}$,}
 which is a contradiction for $k>1$. For $k=1$, $|M(G)|$ attains the bound if and only if $G \cong ES_p(p^3) \times \mathbb{Z}_p^{(n-3)}$. \hfill$\Box$
 
\end{proof}
The following lemma follows from the proof of Lemma \ref{R}$(i)$ using Lemma 3.1$(ii)$.
\begin{lemma}\label{K}
If $G$ is a $p$-group of order $p^n$ such that $|M(G)|$ attains the bound, then for every central subgroup $K$ of order $p$, $|M(G/K)|$ also attains the bound.
\end{lemma}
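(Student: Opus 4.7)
The plan is to pinch the Niroomand inequality chain for $G/K$: because $|M(G)|$ is already maximal, the bounds feeding into it via Theorem \ref{J} and Theorem \ref{N} must collapse to equalities, and the equality on $|M(G/K)|$ is exactly what we want.

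By Lemma \ref{R}(iii), any central subgroup $K$ of order $p$ automatically lies inside $G'$, unless $G$ is the extremal group $ES_p(p^3)\times\mathbb{Z}_p^{(n-3)}$ (the $k=1$ case). In the generic case $K\subseteq G'$ we have $|G/K|=p^{n-1}$, $|(G/K)'|=p^{k-1}$, and $(G/K)^{ab}=G/G'K=G^{ab}$ is elementary abelian of order $p^{n-k}$ by Lemma \ref{R}(i); hence $|(G/K)^{ab}\otimes K|=p^{n-k}$. Combining Theorem \ref{J} with the Niroomand bound of Theorem \ref{N} applied to $G/K$ yields
$$|M(G)|\cdot p \;\le\; |M(G/K)|\cdot p^{n-k} \;\le\; p^{\frac{1}{2}(n+k-4)(n-k-1)+1}\cdot p^{n-k},$$
whose outer exponent simplifies to $\frac{1}{2}(n+k-2)(n-k-1)+2$. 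Since the left side is $p$ times Niroomand's maximum by hypothesis, both inequalities become equalities; in particular $|M(G/K)|$ attains the Niroomand bound for a group of order $p^{n-1}$ with derived subgroup of order $p^{k-1}$.

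It remains to dispatch the exceptional case $G\cong ES_p(p^3)\times\mathbb{Z}_p^{(n-3)}$ with $K\not\subseteq G'$. A generator of $K$ has the form $zw$ where $z$ generates $G'=Z(ES_p(p^3))$ and $w$ is a non-trivial element of the $\mathbb{Z}_p^{(n-3)}$ factor. Extending $w$ to a basis of that factor, killing $zw$ identifies $z$ with $w^{-1}$ and absorbs $\langle w\rangle$ into the $ES_p(p^3)$ part, so $G/K\cong ES_p(p^3)\times\mathbb{Z}_p^{(n-4)}$; this attains its Niroomand bound by Theorem \ref{N}. The main subtle point I expect is the elementary-abelian identification $|(G/K)^{ab}\otimes K|=|(G/K)^{ab}|$ in the generic case, which is what lets the tensor factor drop to a pure order count; this is a direct consequence of Lemma \ref{R}(i) since $(G/K)^{ab}$ is a quotient of $G^{ab}$. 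Beyond that, everything is a short arithmetic collapse of the Niroomand chain together with a structural check in the exceptional case.
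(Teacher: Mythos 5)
Your proposal is correct and follows essentially the same route as the paper: the paper's proof of this lemma is just the inequality chain from the proof of Lemma \ref{R}(i) (Theorem \ref{J} combined with Theorem \ref{N} applied to $G/K$), which must collapse to equalities because $|M(G)|$ is already maximal, together with Lemma \ref{R} to ensure every central $K$ of order $p$ lies in $G'$. Your explicit treatment of the exceptional case $G\cong ES_p(p^3)\times\mathbb{Z}_p^{(n-3)}$ with $K\not\subseteq G'$ is a welcome addition that the paper leaves implicit, and your identification $G/K\cong ES_p(p^3)\times\mathbb{Z}_p^{(n-4)}$ there is correct.
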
 
\begin{lemma}\label{M}
There is no group $G$ of order $p^n$ $(n \geq 4)$ having maximal class such that $|M(G)|$ attains the bound.
\end{lemma}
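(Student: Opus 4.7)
The plan is to induct on $n$, peeling off the centre at each step and reducing to a small base case handled by Theorem~\ref{P}. For a maximal class $p$-group $G$ of order $p^n$ the derived subgroup has order $p^{n-2}$, so $k=n-2$ and the Niroomand bound specialises to $|M(G)|\leq p^{n-1}$. Since $n\geq 4$ forces $k=n-2\geq 2>1$, the exceptional extra-special family of Lemma~\ref{R}(iii) is automatically ruled out, and hence $Z(G)\subseteq G'$. Because $|Z(G)|=p$, Lemma~\ref{K} is available with $K=Z(G)$.

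For the inductive step ($n\geq 5$) one first checks that $G/Z(G)$ is again of maximal class: its order is $p^{n-1}$ and, because $\gamma_{n-2}(G)Z(G)/Z(G)$ is nontrivial of order $p$ while $\gamma_{n-1}(G)Z(G)/Z(G)$ is trivial, its nilpotency class is exactly $n-2=(n-1)-1$. Lemma~\ref{K} then forces $|M(G/Z(G))|$ to attain the bound at the smaller order $p^{n-1}$, contradicting the inductive hypothesis (valid at $n-1\geq 4$).

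Everything thus reduces to the base case $n=4$, where $G$ has nilpotency class $3$ and attaining the bound means $|M(G)|=p^3$. The key observation is that the equality $p^{n-1}=p^{\frac{1}{2}n(n-1)-(n-1)}$ holds precisely when $n=4$, so Theorem~\ref{P} applies verbatim and excludes this situation. I do not foresee a serious obstacle: the whole argument is a routine combination of Lemma~\ref{R}(iii), Lemma~\ref{K} and Theorem~\ref{P}. The only delicate point is simply noticing that matching exponents pins the bottom of the induction exactly at $n=4$, which is precisely where Theorem~\ref{P} bites.
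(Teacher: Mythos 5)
Your argument is correct, but it follows a genuinely different route from the paper. You induct directly on the non-attainment statement: the descent step passes from $G$ to $G/Z(G)$ (again of maximal class, of order $p^{n-1}$) via Lemma~\ref{K}, and the base case $n=4$ is killed by observing that for a maximal class group of order $p^4$ the value $p^{n-1}=p^3$ coincides with $p^{\frac{1}{2}n(n-1)-(n-1)}$, so Theorem~\ref{P} applies. The paper instead proves the strictly stronger quantitative statement $|M(G)|\leq p^{n-2}$ for every maximal class group of order $p^n$, by induction on $n$ using the inequality $|M(G)|\,p\leq |M(G/Z(G))|\,|G^{ab}|$ from \cite[Proposition 2.4]{MRR} together with $|G^{ab}|=p^2$, anchoring the induction at $n=4$ by an explicit computation (HAP/GAP for $p=2$, \cite{EG} for odd $p$); non-attainment then follows since the bound equals $p^{n-1}$. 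Your version buys a cleaner base case with no computation, at the cost of outsourcing it to Theorem~\ref{P} (itself a quoted result) and of proving only non-attainment rather than the gap $|M(G)|\leq p^{n-2}$; both use the same standard facts that $|Z(G)|=p$ and that $G/Z(G)$ is again of maximal class. One small remark: for maximal class groups the inclusion $Z(G)=\gamma_{n-1}(G)\subseteq G'$ is automatic, so your appeal to Lemma~\ref{R}(iii) to rule out the extra-special exception, while harmless, is not needed.
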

\begin{proof}
First we prove that $|M(G)| \leq p^{n-2}$ for $p$-groups $G$ of maximal class. We use induction argument on $n$ to prove this. 

Let $n=4$. Then for $p=2$ using HAP\cite{HAP} of GAP\cite{GAP} and for odd $p$ by \cite[page. 4177]{EG} it follows that $|M(G)| \leq p^2=p^{n-2}$.

Now consider group $G$ of order $p^n$ $(n >4)$ of maximal class. Note that $G/Z(G)$ is also of maximal class. So by induction hypothesis $|M(G/Z(G))| \leq p^{n-3}$. Hence it follows from \cite[Proposition 2.4]{MRR} that \\
\centerline{$|M(G)|p \leq |M(G/Z(G))||G^{ab}| \leq p^{n-1}$.}
Hence $|M(G)| \leq p^{n-2}$. Now if $|M(G)|$ attains the bound for $p$-groups $G$ of maximal class then 
$|M(G)|=p^{\frac{1}{2}(n+n-2-2)(n-n+2-1)+1}=p^{n-1}$, which is a contradiction. \hfill$\Box$

\end{proof}
In view of Lemma \ref{K} we observe that it is sufficient to consider groups $G/K$ such that $|M(G/K)|$ attains the bound for every central subgroup $K$ of order $p$. This observation is going to be key ingredient in the proof.
The following lemma refutes the existence of finite $p$-groups $G$ such that $G/K$ is of nilpotency class $2$ and $|M(G)|$ attains the bound.
\begin{lemma}\label{C2}
There is no non-abelian $p$-group $G$ of order $p^n$, $p\neq 3$ such that $G/K$ is of nilpotency class $2$ for some central subgroup $K$ of order $p$ and $|M(G)|$ attains the bound. For $p=3$, there is a group $G$ such that $|M(G)|$ attains the bound.
\end{lemma}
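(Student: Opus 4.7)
The plan is to combine Lemma~\ref{K} with the class-$2$ classification in Theorem~\ref{Q} to reduce $G/K$ to one of three explicit groups, then to apply Theorem~\ref{RM} to rule out class-$3$ lifts whenever $p\neq 3$, and finally to exhibit a genuine lift when $p=3$. Since the interesting case has $G$ of class $\geq 3$ and $G/K$ has class $2$, the inclusion $\gamma_3(G)\subseteq K$ together with $|K|=p$ and $\gamma_3(G)\neq 1$ forces $\gamma_3(G)=K$; in particular $G$ has class exactly $3$ with $|\gamma_3(G)|=p$. By Lemma~\ref{K}, $|M(G/K)|$ attains the Niroomand bound for the parameters $(n-1,k-1)$, so Theorem~\ref{Q} pins $G/K$ down to one of $G_1,G_2,G_3$, and therefore $(n,k)$ is one of $(n,2)$, $(6,3)$, $(7,4)$ respectively.

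Next I would feed these data into Theorem~\ref{RM}. By Lemma~\ref{R}, $G^{ab}$ is elementary abelian of order $p^{n-k}$, $\Z(G)$ is elementary abelian, and $\Z(G)\subseteq G'$; the last containment gives $\overline{G}^{ab} = G/\Z(G)\gamma_2(G) = G^{ab}$, also of order $p^{n-k}$. Substituting $|M(G^{ab})|=p^{(n-k)(n-k-1)/2}$, $|\gamma_2(G)/\gamma_3(G)\otimes\overline{G}^{ab}|=p^{(k-1)(n-k)}$, $|\gamma_3(G)\otimes\overline{G}^{ab}|=p^{n-k}$ and the Niroomand bound $|M(G)|=p^{(n+k-2)(n-k-1)/2+1}$ into Theorem~\ref{RM} collapses everything to the single clean inequality
\[
|\mathrm{Image}(\psi_2)|\cdot|\mathrm{Image}(\psi_3)| \leq p^{\,n-k-2}.
\]
So for each admissible $(n,k)$ it suffices to produce enough independent vectors in these two images to violate this bound.

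I would then treat the three shapes of $G/K$ case by case, using their presentations to pick lifts of the generators into $G$, compute the iterated commutators, and read off the images of $\psi_2$ and $\psi_3$. In every case the controlling vectors in $\gamma_3(G)\otimes\overline{G}^{ab}$ arise as coefficients of the cyclic Hall--Witt / Jacobi combination
\[
[[x_1,x_2],x_3] + [[x_2,x_3],x_1] + [[x_3,x_1],x_2]
\]
(which vanishes in any class-$3$ group). Symmetrising the four terms defining $\psi_3$ produces a coefficient of $3$ attached to each such cyclic sum, and this is precisely where the prime splits: for $p\neq 3$ this coefficient is a unit, so $\mathrm{Image}(\psi_3)$ acquires enough independent elements to contradict the inequality above; for $p=3$ the coefficient vanishes, no obstruction remains, and one can build an explicit central extension
\[
1 \to \mathbb{Z}_3 \to G \to G_3 \to 1
\]
of order $3^7$ and class $3$ with $|M(G)| = 3^{(7+4-2)(7-4-1)/2+1} = 3^{10}$, furnishing the advertised example (verifiable by hand or with the HAP package in GAP).

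The main obstacle is the bookkeeping in the $G/K\cong G_3$ case: one must choose lifts of the six generators of $G_3$ into $G$, enumerate all the iterated commutators and the relations among them in $\overline{G}$, and then determine exactly which elements of $\gamma_3(G)\otimes\overline{G}^{ab}$ lie in $\mathrm{Image}(\psi_3)$. Isolating the coefficient $3$ as the \emph{unique} obstruction, and matching it to non-existence when $p\neq 3$ and to constructibility when $p=3$, is the conceptual heart of the lemma; the $G_1$ and $G_2$ cases are handled by the same mechanism but the arithmetic is lighter since the admissible $\overline{G}^{ab}$ is smaller there.
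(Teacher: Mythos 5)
Your overall architecture for the central case agrees with the paper: reduce $G/K$ to $G_1,G_2,G_3$ via Lemma~\ref{K} and Theorem~\ref{Q}, note $\gamma_3(G)=K$, feed the data into Theorem~\ref{RM}, and use the Hall--Witt/Jacobi identity to show that the only way $\mathrm{Image}(\psi_3)$ can be trivial is $p=3$, which is exactly where the $3^7$ example over $G_3$ lives. Your clean reformulation $|\mathrm{Image}(\psi_2)||\mathrm{Image}(\psi_3)|\leq p^{\,n-k-2}$ is correct and in fact makes the bookkeeping more transparent than the paper's (for $G_3$ one has $n-k-2=1$, so it suffices to make both images nontrivial, or $\psi_3$'s image of order $\geq p^2$).

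There is, however, a genuine gap in how you dispose of $G_1$ and $G_2$. You assert that these cases ``are handled by the same mechanism'' --- the coefficient $3$ arising from symmetrising the cyclic Jacobi sum inside $\psi_3$. That cannot be right, because the conclusion in those two cases does not exempt $p=3$: the paper rules them out by citing Theorem~\ref{P} (no group of class $\geq 3$ with $t(G)=n-1$, any $p$) for $G_1$, and Theorem~\ref{S} (no group of class $\geq 3$ with $t(G)=n+1$, $p$ odd) for $G_2$. If the unique obstruction were the invertibility of $3$, one would expect $p=3$ lifts of $G_1$ and $G_2$ attaining the bound, and there are none. So for these two cases you need an argument that does not degenerate at $p=3$; your proposed mechanism would (wrongly) let them through. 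A $\psi_2$-based count can plausibly rescue the $G_1$ case within your framework --- with $G/K\cong ES_p(p^3)\times\mathbb{Z}_p^{(n-3)}$ the elements $\psi_2(\alpha\otimes\beta\otimes x)=[\alpha,\beta]_{\gamma}\otimes\bar{x}$ already give $|\mathrm{Image}(\psi_2)|\geq p^{\,n-2}>p^{\,n-4}$ for every $p$ --- but you would still have to produce a separate argument for $G_2$, and none is sketched. Finally, even in the $G_3$ case the proposal stops at a plan: the case analysis on which commutators $[\beta_i,\alpha_j]$ survive in $\gamma_3(G)$, which is where the actual work lies, is not carried out.
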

\begin{proof}
Suppose that $G$ is a group of order $p^n$ and $|G'|=p^k$ such that $|M(G)|$ attains the bound.
Let $G/K$ is of class $2$ for a central subgroup $K$ of order $p$. By Lemma \ref{K} $|M(G/K)|$ also attains the bound. Hence by Theorem \ref{Q} we have $G/K \cong G_1, G_2$ or $G_3$. Now we consider the cases depending on the structure of $G/K$.

If $G/K \cong G_1=ES_p(p^3) \times \mathbb{Z}_p^{(n-3)}$, then $k=2$ and $|M(G)|=p^{\frac{1}{2}(n+k-2)(n-k-1)+1}$ $=p^{\frac{1}{2}n(n-1)-(n-1)}$, which contradicts Theorem \ref{P}.
If $G/K \cong G_2$ then $k=3$ and $|M(G)|=p^8=p^{\frac{1}{2}6(6-1)-(6+1)}$, which contradicts Theorem \ref{S}.

If $G/K \cong G_3=\langle \alpha_1,\beta_1,\alpha_2,\beta_2,\alpha_3, \beta_3 \mid [\alpha_1,\alpha_2]=\beta_3, [\alpha_2,\alpha_3]=\beta_1, [\alpha_3,\alpha_1]=\beta_2,\alpha_i^{(p)}=\beta_i^p=1(i=1,2,3)\rangle$, then $k=4$ and $|M(G)|=$ $p^{\frac{1}{2}(n+k-2)(n-k-1)+1}$ $=p^{10}$. We want to show that $|Image(\psi_3)| \geq p$ for $p\neq 3$. Then from Theorem \ref{RM} we have $|M(G)| \leq p^9$, which is a contradiction.

Suppose $[\beta_i,\alpha_j]$ is non-trivial in $\gamma_3(G)$ for some $j \in \{1,2,3\}$. Without loss of generality assume that $i=1$. Now if  $j=2$ or $3$, then $\psi_3(\alpha_2 \otimes \alpha_3 \otimes \alpha_j \otimes \alpha_1)$ is non-trivial element. Then $|Image(\psi_3)| \geq p$. Similarly we can show for $i=2,3$.

Hence we consider the case $[\beta_i, \alpha_i] \in \gamma_3(G)$ is non-trivial and $[\beta_i,\alpha_j]=1$ for $i,j \in \{1,2,3\}, i \neq j$.
Now let $|Z(G)| > p$. 
Without loss of generality assume that $\beta_2=[\alpha_3,\alpha_1] \in Z(G)$ and there is a non-trivial element $[\beta_1, \alpha_1] \in \gamma_3(G)$. Then  $\psi_3(\alpha_2 \otimes \alpha_3 \otimes \alpha_1 \otimes \alpha_3)$ is non-trivial element. So $|Image(\psi_3)| >p$.
Next case remains $Z(G)=\langle \gamma \rangle \cong \mathbb{Z}_p$. Suppose $Image(\psi_3)=\{1\}$. Then $\psi_3(\alpha_1 \otimes  \alpha_2 \otimes \alpha_3 \otimes \alpha_1)=\psi_3(\alpha_3 \otimes  \alpha_1 \otimes \alpha_2 \otimes \alpha_3)=1$ forces to have $[\beta_3,\alpha_3]=[\beta_2,\alpha_2]=[\beta_1,\alpha_1]$ and by Hall-Witt identity we have $p=3$. 

Using these relations we construct a group $G=\langle \alpha_1,\beta_1,\alpha_2,\beta_2,\alpha_3, \beta_3,\gamma \mid [\alpha_1,\alpha_2]=\beta_3, [\alpha_2,\alpha_3]=\beta_1, [\alpha_3,\alpha_1]=\beta_2, [\beta_3,\alpha_3]=[\beta_2,\alpha_2]=[\beta_1,\alpha_1]=\gamma,
\alpha_i^3=\beta_i^3=\gamma^3=1(i=1,2,3)\rangle$ which is of order $3^7$. Using HAP \cite{HAP} of GAP \cite{GAP} we see that $|M(G)|=p^{\frac{1}{2}(n+k-2)(n-k-1)}+1=p^{10}$.

\hfill$\Box$

\end{proof}
\noindent We are now ready to prove our main result.\\
{\bf Proof of Theorem \ref{C1}.}
First we prove that there is no group $G$ of order $p^n$, $p \neq 3$ of class $c=3$ such that $|M(G)|$ attains the bound. For a central subgroup $K$ of order $p$ in $G$, $G/K$ is of class $2$ or $3$. If $G/K$ is of class $2$, then the result follows from Lemma \ref{C2}.
Now if $G/K$ is of nilpotency class $3$ for every central subgroup $K$ of order $p$, then we use induction on $n$ to prove that there is no such $G$ such that $|M(G)|$ attains the bound.
For $n=5$, $G/K$ is of maximal class, so our result is true for $n=5$, follows from Lemma \ref{M} and Lemma \ref{K}. 
Now let $G$ be a group of class $3$ and $n > 5$. 
If there is a central subgroup of $G/K$  of order $p$ such that the factor group is of class $2$, then the result follows from Lemma \ref{C2} and Lemma \ref{K}.
Hence consider that for every central subgroup of $G/K$  of order $p$, the factor group of $G/K$ is of class $3$ again. Since $|G/K|=p^{n-1}$, by induction hypothesis on $n$ there is no such $G/K$ such that $|M(G/K)|$ attains the bound. Hence by Lemma \ref{K}, result follows for class $c=3$. 

We proved our result for $c=3$. Now we use induction argument to complete the proof for $c > 3$. Let $G$ be a $p$-group of order $p^n$ and of class $c>3$. 
If nilpotency class of $G/K$ is smaller than $c$, then by induction hypothesis on $c$, there is no $G/K$ such that $|M(G/K)|$ attain the bound. Hence result follows by Lemma \ref{K}.
Now if nilpotency class of $G/K$ is $c$ for every central subgroup $K$ of order $p$, then we use induction on $n$ to prove our result.
For $n=c+2$, $G/K$ is of maximal class. So our result is true for $n=c+2$, by Lemma \ref{M} and Lemma \ref{K}.
Let $|G|=p^n$ of class $c$ with $n > c+2$. If there is a central subgroup of $G/K$ of order $p$ such that the factor group is of class smaller than $c$ then the result follows by induction on $c$ and by Lemma \ref{K}.
So now for every central subgroup of $G/K$  of order $p$, the factor group of $G/K$ is of class $c$ again. As $|G/K|=p^{n-1}$, by induction hypothesis on $n$ there is no such $G/K$ such that $|M(G/K)|$ attains the bound. Hence our result follows by Lemma \ref{K}.
This completes the proof.
\hfill$\Box$
\\
\\
We conclude our paper by providing some examples of groups $G$ of order $p^n$, such that $|M(G)|=p^{\frac{1}{2}(n+k-2)(n-k-1)}$ where $|G'|=p^k$.\\
{\bf Example1:} Consider the group $G=\langle \alpha,\alpha_1,\alpha_2,\alpha_3,\alpha_4\mid [\alpha,\alpha_1]=\alpha_2,[\alpha_2,\alpha]=\alpha_3,[\alpha_2,\alpha_1]=\alpha_4,\alpha^p=\alpha_i^p=1 (i=1,2,3,4) \rangle$ from \cite{RJ}. This is group of order $p^5$ with $|G'|=p^3$. Nilpotency class of $G$ is $3$. For $p=5,7,11,13,17$ using HAP \cite{HAP} of GAP \cite{GAP} we obtain $M(G)\cong\mathbb{Z}_p \times \mathbb{Z}_p \times \mathbb{Z}_p$. Note that $|M(G)|=p^{\frac{1}{2}(5+3-2)(5-3-1)}=p^3$.
\\
{\bf Example2:} Consider the group
$G=\langle \alpha,\alpha_1,\alpha_2,\alpha_3,\alpha_4 \mid [\alpha_i,\alpha]=\alpha_{i+1},\alpha^p=\alpha_1^{(p)}=\alpha_{i+1}^{(p)}=1 (i=1,2,3)\rangle$ from \cite{RJ} and $\alpha_{i+1}^{\left(p\right)} = \alpha_{i+1}^p \alpha_{i+2}^{p \choose 2} \cdots\alpha_{i+k}^{p \choose k}... \alpha_{i+p}$. This is group of order $p^5$ with $|G'|=p^3$. Nilpotency class of $G$ is $4$. For $p=5,7,11,13,17$ using HAP \cite{HAP} of GAP \cite{GAP} we obtain $M(G)\cong\mathbb{Z}_p \times \mathbb{Z}_p \times \mathbb{Z}_p$. Note that $|M(G)|=p^{\frac{1}{2}(5+3-2)(5-3-1)}=p^3$.
\\
\\
{\bf Acknowledgement}:
I am very grateful to my supervisor Dr. Manoj K. Yadav for his encouragement and discussions. Also I am thankful to Dr. Pradeep K. Rai for helpful discussions.
I wish to thank the Harish-Chandra Research Institute, the Dept. of Atomic Energy, Govt. of India, for providing excellent research facility.

\end{document}